\documentclass{amsart}
\usepackage{amssymb}
\usepackage{multirow}

\title{The 4-Regular Edge-Transitive Graphs of Girth 4}
\author{Tomas Boothby}
\author{Matt DeVos}

\usepackage{tikz}
\usetikzlibrary{patterns}
\usepackage{mathrsfs}
\usepackage{comment}
\usepackage{longtable}
\usepackage{multicol}
\usepackage{tabu}

\newtheorem{theorem}{Theorem}[section]

\newtheorem{lemma}[theorem]{Lemma}

\newtheorem{proposition}[theorem]{Proposition}
\newtheorem{observation}[theorem]{Observation}

\theoremstyle{definition}

\newcommand{\br}[1]{\left\{#1\right\}}

\newcommand{\Aut}{\mathrm{Aut}}

\newcommand{\Hea}{\mathrm{Hea}}

\newcommand{\coHea}{{{\Hea}^\mathbf{c}}}

\newcommand{\cF}{\mathcal{F}}

\newcommand{\ZZ}{\mathbb{Z}}

\begin{document}

\begin{abstract}
This paper presents a characterization of edge-transitive graphs which are four regular and have girth four.  This class consists of four infinite families plus four exceptional graphs.  
\end{abstract}


\maketitle

\section{Introduction}

 
In this paper graphs are permitted to be infinite, but are assumed to be simple except where explicitly stated.  We treat edges as unordered pairs of vertices, and call an ordered pair $(u,v)$ of adjacent vertices an \emph{arc}.  An \emph{automorphism} of a graph $X = (V,E)$  is a bijection $\phi : V \rightarrow V$ with the property that $\{u,v\} \in E$  if and only if $\{ \phi(u), \phi(v) \} \in E$.  The set of all automorphisms of $X$ forms a group under composition called the \emph{automorphism} group and denoted $\Aut(X)$.    We say that the graph $X$ is \emph{vertex-transitive} (\emph{edge-transitive, arc-transitive}) if $\Aut(X)$ acts transitively on $V$ ($E$, the set of arcs).  Similarly, we will say that an embedded graph $X$ is \emph{vertex-transitive} (\emph{edge-transitive, arc-transitive}) if the subgroup of $\Aut(X)$ which preserves the faces acts transitively on $V$ ($E$, the set of arcs).

The Platonic Solids discovered by the ancient Greeks give a complete collection of 3-connected vertex-transitive graphs embedded in the sphere with the added property that all faces have the same size.  More generally, there is a well-established theory of maps which provides numerous classification results for embedded graphs in a variety of surfaces.  

Instead of working with embedded graphs, where each edge appears in two (generally short) facial cycles, an alternate problem is to attempt to classify graphs which are equipped with a symmetry condition, such as vertex-transitivity, together with some conditions which imply the existence of short cycles.  For instance, it is well-known that there are just four arc-transitive cubic graphs with girth 6 for which there are more than two 6-cycles through a given edge (the \emph{girth} of a graph is the length of its shortest cycle).  These are the graphs known as:  Heawood,  M\"obius-Kantor, Pappus, and Desargues.  The following result follows easily from this.

\begin{proposition}
\label{g6cubic}
Every finite arc-transitive cubic graph with girth equal to 6 apart from the M\"obius-Kantor and Desargues Graph appears as an arc-transitive embedded graph with all faces of size 6 in either the torus or Klein Bottle.  
\end{proposition}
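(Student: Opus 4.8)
The plan is to stratify the arc-transitive cubic girth-$6$ graphs by the number $t$ of $6$-cycles through a fixed edge. Because arc-transitivity implies edge-transitivity, $t$ does not depend on the chosen edge, and since the girth is $6$ there is at least one $6$-cycle, so $t \geq 1$. The cited classification says that $t > 2$ holds exactly for the four graphs Heawood, M\"obius--Kantor, Pappus, and Desargues, so the argument naturally breaks into the cases $t = 1$, $t = 2$, and $t > 2$.

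First I would eliminate $t = 1$ by a local argument. Fix a $6$-cycle $v_0 v_1 \cdots v_5 v_0$ and let $w$ be the third neighbour of $v_1$. Girth $6$ forces $w$ off this cycle, since $w \in \{v_3, v_5\}$ would create a triangle and $w = v_4$ would create a $4$-cycle. The edge $v_1 w$ lies on some $6$-cycle $C'$, which must leave $v_1$ along $v_0 v_1$ or $v_1 v_2$; that edge then lies on both $C'$ and the original cycle, and these are distinct because $w$ lies on $C'$ but not on the original. Hence some edge lies on at least two $6$-cycles, contradicting $t = 1$, so in fact $t \geq 2$.

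For the principal case $t = 2$ I would declare the faces of the map to be all of the $6$-cycles. Each edge lies on exactly two of them, and at any vertex the three incident $6$-cycles cover its three incident edges in the three distinct pairs, so the local rotation is a single $3$-cycle and the vertex has a disk neighbourhood; thus the $6$-cycles assemble into a closed surface. Euler's formula together with $3V = 2E$ and $F = E/3$ gives $V - E + F = 0$, and the only closed surfaces of Euler characteristic zero are the torus and the Klein bottle. Arc-transitivity is then automatic: every automorphism of the graph permutes its $6$-cycles and hence preserves the face set, so the full group $\Aut(X)$ preserves the map and already acts transitively on arcs.

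The remaining case $t > 2$ is where the real work lies. By the classification $X$ is one of the four exceptional graphs, and M\"obius--Kantor and Desargues are precisely those excluded from the statement, so it suffices to embed Heawood and Pappus. Here the faces can no longer be taken to be \emph{all} the $6$-cycles, and selecting the correct face-transitive subfamily is the main obstacle. I would treat these two classically: realize Heawood as the dual of the $7$-vertex toroidal triangulation by $K_7$ (the regular map $\{6,3\}_{(1,2)}$ with seven hexagonal faces) and Pappus as the regular toroidal map $\{6,3\}_{(3,0)}$ with nine hexagonal faces, checking in each case that the chosen hexagons close up on the torus and that the resulting map acts transitively on its arcs. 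Combining the three cases then yields the proposition.
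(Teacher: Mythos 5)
Your argument is correct, and it is worth noting that the paper never actually proves Proposition~\ref{g6cubic}: it is stated as following ``easily'' from the cited classification of the four graphs having more than two $6$-cycles through an edge, so your write-up is a filling-in of that remark rather than an alternative to some other argument in the text. The stratification by the frequency $t$ of $6$-cycles per edge, the local argument ruling out $t=1$, the disk-gluing plus Euler characteristic computation for $t=2$, and the appeal to the classification followed by explicit toroidal maps for Heawood and Pappus are exactly what ``follows easily'' must mean here, and each step you give is sound. Two small points deserve attention. First, in the $t=2$ case you assert without justification that the three $6$-cycles through a vertex $v$ use the three \emph{distinct} pairs of incident edges; this needs (and has) a one-line proof: each edge at $v$ lies on exactly two $6$-cycles, and both of those cycles pass through $v$, so each of the three edges appears in exactly two of the three pairs, which is impossible if some pair is repeated. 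This is what guarantees the vertex link is a single triangle rather than a degenerate configuration, so it is the crux of the surface claim. Second, the Heawood map $\{6,3\}_{2,1}$ (your $\{6,3\}_{(1,2)}$) is a chiral rather than reflexible regular map; this is harmless, since its rotation group already acts transitively on arcs and preserves the face set, which is all that the paper's definition of an arc-transitive embedded graph requires.
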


Feng and Nedela \cite{FN} have extended the above result by showing that apart from the Coxeter Graph, every arc-transitive cubic graph of girth 7 appears as an arc-transitive embedded graph with all faces of size 7.  These results have been extended even further to girth $\le 9$ by Conder and Nedela \cite{CN}.  Although it is not our focus, let us comment that there is a related body of results which serve to describe such graphs in terms of their symmetries; notably, Kutnar and Maru\v{s}i\v{c} \cite{KM} have given a complete classification of arc-transitive cubic graphs of girth 6 by showing that apart from finitely many exceptions, every such graph falls into one of two families of explicit Cayley graphs.




We define $\cF$ to be the class of all connected edge-transitive graphs which are 4-regular and have girth 4.  Our goal in this article will be to prove a result similar to Proposition \ref{g6cubic} but for the class $\cF$.  Although we will not provide a full classification, our results appear to be as complete as reasonably possible.  For a graph $X$ in $\cF$, edge-transitivity implies the existence of a positive number $k$, called the \emph{frequency} of $X$, so that every edge is contained in exactly $k$ cycles of length four.  We let $\cF_k$ denote the subclass of $\cF$ consisting of the graphs with frequency $k$.  Similar to the phenomena in the above proposition, we will find some exceptional graphs with high frequency, and for frequency 2 we will encounter some families of graphs which have natural embeddings in the torus.  


There are four exceptional graphs which appear in our classification, and we pause now to introduce them.  As usual, we will let $K_{n,m}$ denote a complete bipartite graph with bipartite classes of sizes $m$ and $n$.

\bigskip

{\bf Exceptional Graphs in $\cF$}
\begin{enumerate}
\item The graph $K_{4,4}$ is in $\cF_9$.
\item There is a unique (up to isomorphism) graph which can be obtained from $K_{5,5}$ by deleting the edges of a perfect matching.  We denote this graph by $K_{5,5}-M$ and note that it is in $\cF_6$.  
\item Consider the Fano Plane, $PG(2,2)$, and denote the its points and lines by $\mathcal{P}$ and $\mathcal{L}$.  We define $\coHea$ to be the bipartite graph with bipartition $\{ \mathcal{P}, \mathcal{L} \}$ and $p \in \mathcal{P}$ adjacent to $\ell \in \mathcal{L}$ if $p$ and $\ell$ are non-incident in the Fano Plane.  $\coHea$ may also be viewed as the bipartite complement of the incidence graph of the Fano Plane (better known as the Heawood graph) and is  in $\cF_3$.  
\item The 4-dimensional cube, denoted $Q_4$, is in $\cF_3$.
\end{enumerate}

\bigskip

For a graph $Y$ and a vertex $v \in V(Y)$ we let $\delta(v)$ denote the set of edges incident with $v$.  If $G \le \Aut(Y)$ then the \emph{stabilizer of} $v$ is the subgroup $G_v = \{ \phi \in G \mid \phi(v) = v \}$.  Note that $G_v$ has a natural action on $\delta(v)$ and more generally on the set of all $t$-element subsets of $\delta(v)$ for any $t$.  

Next we will introduce the four infinite families of graphs appearing in our classification.  the latter two families are quite simple to describe explicitly.  The former two families we will introduce here, but postpone a detailed explanation on how to construct them to the next two sections.  

\bigskip

{\bf Families of Graphs in $\mathbf{\cF}$}
\begin{enumerate}
\item Let $Y$ be a 4-regular graph, let $G \le \Aut(Y)$ act vertex-transitively, and suppose that for every vertex $v$, the action of $G_v$ on the 2-element subsets of $\delta(v)$ has an orbit $\mathcal{Q}_v$ of size four.   We define $\square(Y,G)$ to be the graph with vertex set $E(Y)$ and edge set $\cup_{v \in V(Y)} \mathcal{Q}_v$.  In Section \ref{squaresec} we will show that every graph of the form $\square(Y,G)$ with girth $\ge 4$ is in $\cF$, and every graph in $\cF_1$ can be constructed in this manner.


\item A \emph{quadrangulation} is an embedded graph with the property that every face is bounded by a cycle of length four.  In Section \ref{quadsec} we will describe a generic construction of graphs in $\cF_2$ by showing that they arise as certain special quadrangulations of the torus, cylinder, and plane.  
\item Let $Y$ be a 4-regular graph and construct a new bipartite graph $2 \times Y$ with bipartition $\{   \{1,2\} \times V(Y), E(Y) \}$, where $(i,v) \in \{1,2\} \times V(Y)$ is adjacent to $e \in E(Y)$ if $e,v$ are incident in the original graph $Y$.  If the graph $Y$ is arc-transitive, it is straightforward to verify that $2 \times Y$ is in $\cF_3$.  
\item We define $C_m^{ (2) }$ ($C_{\infty}^{(2)}$) to be the graph with vertex set $\{1,2\} \times \ZZ/ m \ZZ$ ($\{1,2\} \times \mathbb{Z}$) and two vertices $(i,a), (j,b)$ adjacent if $a-b \in \{ -1, 1 \}$.  The graphs $C_m^{(2)}$ with $m \ge 5$ and $C_{\infty}^{(2)}$ are in $\cF_5$.
\end{enumerate}

\bigskip

With this we can state a simplified version of our main result.  More detail on the graphs in the first two entries can be found in the next two sections.  

\begin{theorem}\label{thm:main}
The following table gives a description (up to isomorphism) of the graphs contained in each  nonempty $\cF_k$.
\begin{center}
\begin{tabular}{c|l}
$k$	&	$\cF_k$	\\
\hline
1	&	Graphs of the form $\square ( Y, G )$		\\
2	&	Quadrangulations of the torus, cylinder, or plane \\
3 	&	$Q_4$, $\coHea$, $2 \times Y$ where $Y$ is arc-transitive and 4-regular \\
5	&	$C_m^{(2)}$ for $m \ge 5$ and $C_{\infty}^{(2)}$	\\
6	&	$K_{5,5} - M$.	\\
9	&	$K_{4,4}$.	 \\
\end{tabular}
\end{center}
\end{theorem}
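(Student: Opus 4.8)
The plan is to reduce everything to the local structure around a single edge and then to reconstruct each member of $\cF$ from the possible local data. Fix $X\in\cF_k$ and an edge $e=\{u,v\}$. Since $X$ has girth $4$ it is triangle-free, so $N(u)$ and $N(v)$ are independent and the sets $A=N(v)\setminus\{u\}$ and $B=N(u)\setminus\{v\}$ are disjoint (a common neighbour would create a triangle). Each has size $3$, and a $4$-cycle through $e$ is precisely a path $u,v,a,b,u$ with $a\in A$, $b\in B$, and $ab\in E(X)$; hence the $4$-cycles through $e$ biject with the edges of the bipartite ``link'' $H_e$ between $A$ and $B$. In particular $1\le k\le 9$, the frequency is well defined by edge-transitivity, and a short double count shows every vertex lies in exactly $2k$ four-cycles. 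I would record two invariants of an edge: the isomorphism type of $H_e$ (constant over all edges), and, for a vertex $w$, the $G_w$-invariant multiset of pairs of $\delta(w)$ carrying the four-cycles through $w$.

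Next I would invoke the standard dichotomy that a connected edge-transitive graph is either vertex-transitive or bipartite with its two vertex-orbits as parts, the latter case allowing the two orbits to carry different links. The whole classification then becomes: determine which bipartite links $H_e$ on $3+3$ vertices with $k$ edges are compatible with a transitive action, and for each admissible link reconstruct the globally consistent graphs. The cases $k=1$ and $k=2$ are exactly the content of Sections~\ref{squaresec} and~\ref{quadsec}, so I would cite those to obtain the rows $\square(Y,G)$ and the quadrangulations.

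The core is then the case analysis for $k\ge 3$. The extreme frequencies are rigid: $k=9$ forces $H_e=K_{3,3}$, which at once exhibits the two parts of a $K_{4,4}$; and $k=6$ forces $H_e$ to be $2$-regular, hence $K_{3,3}$ minus a perfect matching, from which one recovers $K_{5,5}-M$. For $k=5$ the link is the double star with degree sequence $(3,1,1)$ on each side, and propagating it along the graph yields the cyclic blow-ups $C_m^{(2)}$ and $C_\infty^{(2)}$. The richest case is $k=3$, where the links surviving the symmetry constraints are the matching $3K_2$ and the star $K_{1,3}$: the star forces the bipartite doubling structure and gives exactly the graphs $2\times Y$ with $Y$ arc-transitive, while the matching pins the local geometry down so tightly that only $Q_4$ and $\coHea$ remain. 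Finally the frequencies $k=4,7,8$ are eliminated by showing no link on $3+3$ vertices with that many edges admits a symmetry group rich enough to be consistent with edge-transitivity while closing up to a girth-$4$ graph, and $k\ge 10$ is impossible since $|A|\,|B|=9$.

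I expect the main obstacle to be the reconstruction step together with the non-existence proofs for $k\in\{4,7,8\}$. Because $\cF$ contains infinite graphs, I cannot lean on global counting; instead the link data must be shown to propagate to a unique graph, essentially a ``locally-$H_e$'' (covering/voltage-graph) argument. The genuinely delicate point is that the local link need not determine the graph: $Q_4$, $\coHea$, and one side of $2\times Y$ all share the same vertex-link, so separating them requires extracting second-order incidence information (the Fano structure versus the hypercube relations) from edge-transitivity. The hard work therefore lies precisely in wringing enough of this higher-order structure out of the symmetry hypothesis to both forbid the missing frequencies and split the surviving links cleanly into the listed families.
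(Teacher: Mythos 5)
Your link-graph setup is sound, and every endpoint you name is in fact correct ($K_{3,3}$ for $k=9$, the $6$-cycle for $k=6$, the double star for $k=5$, and $3K_2$ or $K_{1,3}$ for $k=3$), but the proposal defers precisely the steps that constitute the proof, and its key ``forcing'' claims are asserted rather than argued. Nothing you write justifies that $k=6$ forces a $2$-regular link (you must exclude bipartite links with degree sequences such as $(3,2,1)$ or $(3,3,0)$ against $(2,2,2)$), that $k=3$ admits only the matching and the star (the path and the ``cherry plus pendant edge'' links must be excluded), that $k=5$ forces the double star, or that $k\in\{4,7,8\}$ supports no consistent link at all. Likewise the reconstruction step---showing that an admissible link propagates to exactly the listed graphs---is named but never performed, and you yourself observe it cannot be a purely local argument since $Q_4$, $\coHea$, and $2\times Y$ share local data. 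These items are essentially the entire content of the paper's Section 4; what you have is a correct map of the terrain, not a proof.

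It is worth seeing how the paper organizes the same material, because the translation is exact: a link vertex of degree $3$ is equivalent to a $K_{4,2}$ subgraph (a pair of ``clone'' vertices with identical neighbourhoods), a link vertex of degree $\ge 2$ is equivalent to a $K_{3,2}$ subgraph, and ``every link is a perfect matching'' is equivalent to ``no $K_{3,2}$ subgraph,'' which together with $k\ge 3$ forces $k=3$. The paper's three stability lemmas are indexed by these containments rather than by frequency: Lemma~\ref{lem:K42} quotients by the clone relation to get $K_{4,4}$, $C_m^{(2)}$, or $2\times Y$; Lemma~\ref{lem:K32} uses the canonical $\Aut(X)$-invariant orientation of a non-arc-transitive edge-transitive graph to force $K_{5,5}-M$; Lemma~\ref{lem:Q4coHea} runs a distance-layer analysis with a ``unique completion'' property to force $Q_4$ or $\coHea$; and $k=1,2$ are Sections~\ref{squaresec} and~\ref{quadsec}, exactly as you cite them. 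This organization buys two things your frequency-first plan does not: the lemmas never need to know $k$ in advance, and the impossibility of $k\in\{4,7,8\}$ is a corollary of the classification (the lemmas' outputs happen to have frequencies $3,5,6,9$ only) rather than a separate nonexistence task. To complete your outline you would need to prove your link restrictions and propagation claims, and the natural tools for doing so---clones for degree-$3$ link vertices, orientations for degree-$2$, layers for matchings---would in effect reproduce the paper's three lemmas.
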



The remainder of the paper is organized as follows.  The second and third sections will be devoted to describing generic constructions of the infinite families of graphs appearing in the first two entries of the above table.  The fourth section establishes a handful of stability lemmas showing that graphs in $\cF$ with certain subgraphs fall into our classification and then calls on these to complete the proof.

\section{Frequency 1}
\label{squaresec}

In this section we will give a generic construction of graphs in $\cF_1$ by showing that they arise from 4-regular graphs $Y$ equipped with a subgroup $G \le \Aut(Y)$ which acts vertex transitively and has the added property that for every vertex~$v$, the action of $G_v$ on the 2-element subsets of $\delta(v)$ has an orbit of size 4.  This is essentially a negative result for us, as it indicates that the family $\cF_1$ is not likely to have a simple explicit description as does $\cF_k$ for $k \ge 2$.  In particular, our results demonstrate that classifying the graphs in $\cF_1$ would require classifying all 4-regular graphs of girth $\ge 5$ which have certain kinds of vertex stabilizers.

First we will investigate how these vertex stabilizers of interest may act on their incident edges.  Let $H$ be a group acting on a four element set $\{a,b,c,d\}$, and suppose that the action of $H$ on the 2-element subsets of $\{a,b,c,d\}$ has an orbit~$O$ of size four.  It follows from the assumption that $O$ is an orbit that the two 2-element subsets of $\{a,b,c,d\}$ not contained in $O$ must be disjoint.  So, we may assume without loss that $O = \{ \{a,b\}, \{b,c\}, \{c,d\}, \{a,d\} \}$.  Let $\hat{H}$ be the subgroup of permutations of $\{a,b,c,d\}$ associated with an element in $H$.  Since $\hat{H}$ must fix $O$ as a set, we find that $\hat{H}$ is a subgroup of the dihedral group generated by $(abcd)$ and $(ac)$.  Since $H$ acts transitively on $O$, the group $\hat{H}$ must then be one of:
\begin{itemize}
\item The dihedral group generated by $(abcd)$ and $(ac)$
\item The cyclic group generated by $(abcd)$ 
\item The Klein 4-group generated by $(ac)$ and $(bd)$.  
\end{itemize}
A quick check reveals that all three of these groups have the desired properties.  So, for the graphs $Y$ of interest to us, the subgroups $G \le \Aut(Y)$ will have vertex stabilizers acting in one of these three manners.  Next we prove that our operator $\square$ yields graphs in $\cF$.

\begin{observation}
Every graph of the form $\square(Y,G)$ with girth $\ge 4$ is in $\cF$. 
\end{observation}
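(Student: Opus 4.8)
The plan is to establish the four properties defining membership in $\cF$ for $X := \square(Y,G)$—namely $4$-regularity, edge-transitivity, girth exactly $4$, and connectedness—after first arranging that $G$ acts on $X$ by automorphisms. Throughout I would use the structural fact established just above: a size-four orbit of the action on $2$-element subsets of $\delta(v)$ necessarily has the form $\{\{a,b\},\{b,c\},\{c,d\},\{a,d\}\}$, i.e.\ $\mathcal{Q}_v$ is precisely the edge set of a $4$-cycle on the four edges of $\delta(v)$.

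First I would make the choice of orbits $G$-equivariant. Fix a base vertex $v_0$ with a size-four orbit $\mathcal{Q}_{v_0}$ of $G_{v_0}$, and for an arbitrary vertex $w$ choose $g \in G$ with $g v_0 = w$ and set $\mathcal{Q}_w := g(\mathcal{Q}_{v_0})$. Since $\mathcal{Q}_{v_0}$ is fixed setwise by $G_{v_0}$, this is independent of the choice of $g$ (two such choices differ by an element of $G_{v_0}$), and conjugation shows $\mathcal{Q}_w$ is a size-four orbit of $G_w$. The resulting family satisfies $g(\mathcal{Q}_v) = \mathcal{Q}_{gv}$ for all $g,v$, so the edge set $\bigcup_v \mathcal{Q}_v$ is $G$-invariant and $G \le \Aut(X)$.

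For $4$-regularity, consider a vertex $e = \{u,v\}$ of $X$. As $Y$ is simple, any edge $f$ with $\{e,f\}$ an edge of $X$ shares its unique common endpoint $u$ or $v$ with $e$, so all neighbours of $e$ come from $\mathcal{Q}_u \cup \mathcal{Q}_v$. Viewing $\mathcal{Q}_u$ as a $4$-cycle on $\delta(u)$, the vertex $e$ has exactly two neighbours there among $\delta(u)\setminus\{e\}$, and likewise two in $\mathcal{Q}_v$; these four are distinct because an edge at $u$ other than $e$ cannot also be incident with $v$ (it would then equal $e$), which also rules out multiple edges. Hence $X$ is simple and $4$-regular. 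The same $4$-cycle at each $v$ shows the girth of $X$ is at most $4$, and combined with the hypothesis girth $\ge 4$ it is exactly $4$.

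Edge-transitivity I would prove in two steps: given edges $\{e,f\} \in \mathcal{Q}_v$ and $\{e',f'\} \in \mathcal{Q}_{v'}$, vertex-transitivity supplies $g \in G$ with $gv = v'$, so $g(\mathcal{Q}_v) = \mathcal{Q}_{v'}$ carries $\{e,f\}$ into $\mathcal{Q}_{v'}$, after which transitivity of $G_{v'}$ on its orbit $\mathcal{Q}_{v'}$ completes the map. Connectedness follows from that of $Y$ (which we may assume, since otherwise $X$ is disconnected): the $4$-cycle at each $w$ joins all of $\delta(w)$ within $X$, and propagating along a walk in $Y$ connects any two edges. The only genuinely delicate point—the main obstacle—is the equivariance bookkeeping of the second paragraph: one must verify that the edge set is well defined and $G$-invariant even when several size-four orbits are available at a vertex, which is exactly why the $G_v$-invariance of each $\mathcal{Q}_v$ is the load-bearing observation of the whole argument.
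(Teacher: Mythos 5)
Your proof is correct and follows essentially the same route as the paper's: the size-four orbit $\mathcal{Q}_v$ forms a $4$-cycle on $\delta(v)$, which yields $4$-regularity and girth exactly $4$ (using the hypothesis girth $\ge 4$), and edge-transitivity comes from combining transitivity of $G$ on $\{\mathcal{Q}_v \mid v \in V(Y)\}$ with transitivity of $G_v$ on $\mathcal{Q}_v$; your extra care with simplicity and connectedness is detail the paper omits. One correction to your closing remark: the ``main obstacle'' you identify cannot arise, because the $G_v$-orbits partition the six $2$-element subsets of $\delta(v)$ and $4+4>6$, so there is at most one orbit of size four; hence $\mathcal{Q}_v$ is canonically determined, $g(\mathcal{Q}_v)=\mathcal{Q}_{gv}$ holds automatically, and your base-point equivariance construction---while valid---is bookkeeping for a situation that never occurs, which is precisely why the paper's terser argument is already complete.
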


\begin{proof}
By our assumptions, for every $v \in V(Y)$ the action of $G_v$ on the 2-element subsets of $\delta_Y(v)$ has an orbit $\mathcal{Q}_v$ of size four.  By construction, the graph $X = \square(Y,G)$ has vertex set $E(Y)$ and edge set $\cup_{v \in V(Y)} \mathcal{Q}_v$.  It follows from the above discussion that for every $v \in V(Y)$ the subgraph of $X$ with vertex set $\delta_Y(v)$ and edge set $\mathcal{Q}_v$ is a 4-cycle.  It follows immediately from this that the graph $X$ will be 4-regular and have girth 4.  To prove that $X$ is vertex transitive, we will show that the action of $G$ on $X$ (inherited from its action on $Y$) is edge-transitive.  It follows from the assumption that $G$ acts vertex transitively on $Y$ that $G$ acts transitively on $\{ \mathcal{Q}_v \mid v \in V(Y) \}$.  Combining that property with the assumption that each $\mathcal{Q}_v$ is an orbit of the action of $G_v$ demonstrates that $X$ is edge-transitive, as desired.
\end{proof}

\begin{theorem}
If $X$ is in $\cF_1$, then there exists a $4$-regular vertex-transitive graph $Y$ and a suitable group $G \le \Aut(Y)$ so that $X \cong \square(Y,G)$.
\end{theorem}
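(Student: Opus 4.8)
The plan is to reconstruct $Y$ directly from the $4$-cycle structure of $X$. I would set $V(Y)$ to be the collection of $4$-cycles of $X$ and $E(Y)$ to be $V(X)$, declaring a vertex $x \in V(X)$ to be the edge of $Y$ joining the two $4$-cycles that contain it. The first step is to record the local picture forced by frequency $1$: for $x \in V(X)$, each of the four edges in $\delta(x)$ lies in a unique $4$-cycle, and any $4$-cycle through $x$ uses exactly two edges of $\delta(x)$; hence those four edges partition into two pairs and $x$ lies in exactly two distinct $4$-cycles. This makes the assignment ``$x \mapsto$ its two $4$-cycles'' well defined, and since each $4$-cycle $C$ has four vertices, it meets exactly four edges of $Y$, so $Y$ is $4$-regular.

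The step I expect to be the main obstacle, and the one place frequency $1$ does genuine work, is showing that $Y$ is \emph{simple}. I would isolate this as a lemma: any two distinct $4$-cycles of $X$ meet in at most one vertex. Two distinct $4$-cycles cannot share an edge, since that edge would then lie in two $4$-cycles. So if distinct cycles $C$ and $C'$ shared two vertices $x, x'$, these would be non-adjacent (``diagonal'') in each, giving $C = x\,u\,x'\,w\,x$ and $C' = x\,p\,x'\,q\,x$ with $\{u,w\}\cap\{p,q\}=\emptyset$. But then $x\,u\,x'\,q\,x$ is a $4$-cycle distinct from $C$ and also passing through the edge $xu$, contradicting frequency $1$. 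Given the lemma, a loop in $Y$ is excluded because the two $4$-cycles through a vertex are distinct, and a multi-edge is excluded because two vertices $x \neq x'$ joining the same pair of $4$-cycles would force those cycles to share two vertices. Thus $Y$ is a simple $4$-regular graph.

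Next I would build the group. Let $G$ be the image of $\Aut(X)$ under its natural action on $V(Y)$ (the $4$-cycles) and $E(Y) = V(X)$; automorphisms of $X$ preserve incidence between vertices and $4$-cycles, so $G \le \Aut(Y)$. Edge-transitivity of $X$ together with uniqueness of the $4$-cycle through each edge yields transitivity of $G$ on $V(Y)$: given $4$-cycles $C, C'$, choose edges $e \in C$, $e' \in C'$ and $\phi \in \Aut(X)$ with $\phi(e)=e'$; then $\phi(C)$ is a $4$-cycle through $e'$, hence equals $C'$. The same argument shows that the stabilizer $G_v$ of a $4$-cycle $v = C$ is transitive on the four edges of $C$. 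Since $G_v$ preserves the cyclic adjacency of $C$, it permutes the four ``consecutive'' pairs $E(C)$ among themselves (never mixing them with the two diagonals), so $\mathcal{Q}_v := E(C)$ is an orbit of $G_v$ of size $4$ on the $2$-subsets of $\delta_Y(v)$, and $G$ is suitable.

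Finally I would conclude by comparing the two graphs on the nose: $\square(Y,G)$ has vertex set $E(Y) = V(X)$ and edge set $\bigcup_{v} \mathcal{Q}_v = \bigcup_{C} E(C) = E(X)$, the last equality holding because each edge of $X$ lies in exactly one $4$-cycle. Hence $X \cong \square(Y,G)$, as required. As a remark, combining ``$G$ transitive on $V(Y)$'' with ``$G_v$ transitive on $\delta_Y(v)$'' shows $G$ is transitive on $E(Y) = V(X)$, so $X$ is automatically vertex-transitive, consistent with the Observation that every $\square(Y,G)$ of girth $\geq 4$ lies in $\cF$.
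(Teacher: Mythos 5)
Your proof is correct and takes essentially the same route as the paper: $V(Y)$ is the set of $4$-cycles of $X$, $E(Y)$ is identified with $V(X)$, $G$ is the induced action of $\Aut(X)$, and $\mathcal{Q}_v$ is the set of consecutive pairs of the $4$-cycle $v$, with transitivity on $V(Y)$ and on $\mathcal{Q}_v$ deduced from edge-transitivity plus uniqueness of the $4$-cycle through each edge. The only difference is one of rigor: you explicitly prove that two distinct $4$-cycles meet in at most one vertex (so $Y$ is simple, $4$-regular, and the vertex--edge correspondence is exact), a point the paper's proof leaves implicit.
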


\begin{proof}
Let $S$ be the set of all 4-cycles in $X$ and define a graph $Y$ with vertex set $S$ by the rule that distinct vertices $s,s' \in S$ are adjacent in $Y$ if these 4-cycles share a vertex.  Note that every vertex $v$ of $X$ is contained in exactly two 4-cycles $s,s'$ so there is a natural correspondence between vertices of $X$ and edges of $Y$ (ex. here $v$ corresponds to $\{s,s'\}$).  Furthermore, the group $G = \Aut(X)$ has a natural action on $S$ by which $G$ acts on $Y$.  

It follows immediately from the edge-transitivity of $X$ (and the fact that every edge of $X$ is in a unique 4-cycle) that $G$ acts vertex-transitively on $Y$.  Now consider a vertex $s$ of $Y$ and let $v_1,v_2,v_3,v_4$ be the cyclically ordered vertices of $X$ corresponding to $s$ as a 4-cycle in $X$.  It follows from the edge-transitivity of $X$ that the action of the stabilizer of $s$ on $\{v_1,v_2,v_3,v_4\}$ must be transitive on the pairs $\{ v_i, v_{i+1} \}$ (working modulo 4), but cannot map any such pair to either $\{v_1,v_3\}$ or $\{v_2,v_4\}$.  In short, the set $\{ \{v_1, v_2\}, \{v_2,v_3\}, \{v_3,v_4\}, \{v_1,v_4\} \}$ is an orbit of the action of the stabilizer of $s$ on the pairs of $\{v_1,v_2,v_3,v_4\}$.  Now we find $X \cong \square(Y,G)$ as desired.
\end{proof}

\section{Quadrangulations}
\label{quadsec}

In this section, we will offer a generic construction of graphs in $\cF_2$.  In fact, all graphs of this type arise as quadrangulations of surfaces.  So, we will be interested in maps on surfaces for which every vertex is degree 4 and every face has size 4 (i.e a map of type $\{ 4,4 \}$).  We will begin by offering some constructions which give rise to such quadrangulations and then we will prove that our list gives all such graphs.  

Let $Z^2$ denote the graph with vertex set $\mathbb{Z}^2$ and two vertices $(x,y)$ and $(x',y')$ adjacent if $\{ |x - x'|, |y - y'| \} = \{0,1\}$ (so $Z^2$ is the usual graph associated with the integer lattice $\mathbb{Z}_2$, where two vertices are adjacent when they have distance 1).  In particular, the graph $Z^2$ has a natural embedding in the Euclidean plane $\mathbb{R}^2$.  

Let $\Lambda$ be an additive subgroup of $\mathbb{Z}^2$.  Then we may obtain a graph as a quotient of our embedded graph $Z^2$ by $\Lambda$.  More precisely, we let $Z^2 / \Lambda$ be the graph with vertex set consisting of all $\Lambda$-cosets in $\mathbb{Z}^2$, and two vertices adjacent if these $\Lambda$-cosets contain vertices of $Z^2$ which are adjacent.  This quotient also extends naturally to our embedding.  That is, the graph $Z^2 / \Lambda$ appears embedded in the surface $\mathbb{R}^2 / \Lambda$ as a quadrangulation.  Note that the surface $Z^2 / \Lambda$  is the plane if $\Lambda$ is trivial, a cylinder if $\Lambda$ is generated by a single element, and otherwise it is a torus.  

The group $\mathbb{Z}^2$ retains a natural action on this embedded graph by the rule that every $(x,y) \in \mathbb{Z}^2$ acts by sending each $\Lambda$-coset $R$ to $R + (x,y)$.  This automorphism preserves the embedding and acts transitively on the vertices.  The edges of $Z^2 / \Lambda$ fall into two orbits under this action: the \emph{horizontal edges} consisting of all edges of the form $\{ R, R+(0,1) \}$ for a $\Lambda$-coset $R$, and the \emph{vertical edges} consisting of all edges of the form $\{ R, R+(1,0) \}$.  

We have already seen that the embedded graphs $Z^2 / \Lambda$ are vertex transitive and their edges fall into at most two orbits.  So, in order for such a graph to be edge-transitive, we need only find a single automorphism which maps a vertical edge to a horizontal one.  Next we exhibit one special instance and four families where the subgroup $\Lambda$ admits such a function (in each case we offer one such function).  

\bigskip

{\bf Subgroups $\mathbf{\Lambda \le \mathbb{Z}^2}$ for which $\mathbf{Z^2 / \Lambda}$ is edge-transitive}

\begin{tabular}{|l|l|l|}
\hline
Subgroup		&	Surface	&	Edge-transitivity	\\
\hline
$\Lambda = \{ (0,0) \}$	&	plane	&	$(x,y) \rightarrow (y,x)$	\\
\hline
$\Lambda = \langle (a,-a) \rangle$ for $a \neq 0$	&	cylinder	&	$(x,y) \rightarrow (y,x)$	\\
\hline
$\Lambda = \langle (a,b), (-b,a) \rangle$ for $(a,b) \neq (0,0)$	&	torus		&	$(x,y) \rightarrow (-y,x)$	\\
\hline
$\Lambda = \langle (a,b), (b,a) \rangle$ for $a \neq b$	&	torus	&	$(x,y) \rightarrow (y,x)$	\\
\hline
$\Lambda = \langle (a,a), (b,-b) \rangle$ for $a,b \neq 0$	&	torus	&	$(x,y) \rightarrow (y,x)$	\\
\hline
\end{tabular}

\bigskip

Our main goal in this section is to prove the following theorem, showing that the above list is complete.

\begin{theorem}
Every graph in $\cF_2$ is isomorphic to a graph of the form $Z^2/ \Lambda$ where $\Lambda$ appears in the above table.
\end{theorem}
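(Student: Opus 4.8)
The plan is to turn a graph $X \in \cF_2$ into a quadrangulation by declaring its $4$-cycles to be faces, to recognize the result as a map of type $\{4,4\}$, and then to exploit the fact that the universal cover of any connected $\{4,4\}$-map is the standard square tiling of the Euclidean plane, which is exactly the embedded graph $Z^2$. This realizes $X$ as a quotient $Z^2/\Gamma$ for a suitable group $\Gamma$ of symmetries of the tiling, and the bulk of the argument is then to pin down $\Gamma$ and match it against the table.

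First I would analyze the local structure at a vertex $v$. Since $X$ is $4$-regular with frequency $2$, and every $4$-cycle through an edge contains $v$ whenever that edge is incident with $v$, a double count of incidences between the $4$-cycles through $v$ and the edges at $v$ shows that there are exactly four $4$-cycles through $v$, and that the relation ``these two edges at $v$ lie on a common $4$-cycle'' makes the four edges at $v$ into a $2$-regular loopless multigraph (the \emph{link}). Such a multigraph is either a $4$-cycle or a pair of digons. I would rule out the digon case, in which two $4$-cycles share both their edges at $v$: two neighbours of $v$ would then acquire several common neighbours, which forces additional $4$-cycles through the edges involved and contradicts that the frequency is exactly $2$. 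Hence the link at every vertex is a $4$-cycle, the four edges at $v$ split canonically into two opposite pairs, and the $4$-cycles glue up into a genuine quadrangulation with a consistent rotation system. Standard map theory then yields a covering $Z^2 \to X$ of $\{4,4\}$-maps, whose deck group $\Gamma$ is a subgroup of $\Aut(Z^2) = \ZZ^2 \rtimes D_4$ acting freely, the freeness coming from the fact that $X$ is simple and so the quadrangulation has no identified faces or folded edges.

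Next I would reduce $\Gamma$ to a translation lattice. Freeness rules out nontrivial rotations, so $\Gamma$ contains only translations and possibly glide reflections, the latter making the quotient surface non-orientable. The main obstacle is precisely this orientability step: I would show that even if $X$ also quadrangulates a Klein bottle or M\"obius band, as an abstract graph it is still isomorphic to $Z^2/\Lambda$ for a translation subgroup $\Lambda \le \ZZ^2$, so that we may assume $\Gamma = \Lambda$ is a lattice of rank $0$, $1$, or $2$ (giving the plane, cylinder, and torus respectively). With $X \cong Z^2/\Lambda$, vertex-transitivity is automatic and the edges fall into the two orbits of horizontal and vertical edges, so edge-transitivity is equivalent to the existence of a map automorphism swapping these two orbits. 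Lifting such an automorphism and working modulo translations, it is induced by a linear symmetry $A$ of the square lattice that interchanges the two axes; that is, $A$ is a $90^\circ$ rotation or a reflection in a diagonal, and it must satisfy $A\Lambda = \Lambda$.

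Finally I would classify the $A$-invariant lattices. The rank-$0$ case is the plane (first row). In rank $1$, only a diagonal reflection can preserve a line, which forces $\Lambda = \ang{(a,-a)}$ up to the isomorphism $Z^2/\ang{(c,c)} \cong Z^2/\ang{(c,-c)}$ induced by $(x,y)\mapsto(x,-y)$; this is the cylinder row. In rank $2$, invariance under the $90^\circ$ rotation makes $\Lambda$ a module over the Gaussian integers, hence $\Lambda = \ang{(a,b),(-b,a)}$ (third row), while invariance under a diagonal reflection gives, by the standard rectangular-versus-rhombic dichotomy for reflection-symmetric lattices, the two families $\ang{(a,b),(b,a)}$ and $\ang{(a,a),(b,-b)}$ (fourth and fifth rows). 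The remaining direction-swapping symmetries are conjugate to these and produce isomorphic lattices, so the table is complete. I would close by recording the elementary constraints, namely that $\Lambda$ contain no nonzero vector of small $\ell_1$-norm, which make each quotient a simple graph of girth exactly $4$ and frequency $2$ and which account for the side conditions such as $a \neq b$ and $a,b \neq 0$ appearing in the table.
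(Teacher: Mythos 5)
Your outline follows the same architecture as the paper's proof: rule out two adjacent edges lying on two common $4$-cycles, glue discs into the $4$-cycles to get a $\{4,4\}$-map, pass to the universal cover $Z^2$, and classify the lattices invariant under an axis-swapping symmetry. Your endgame is correct and in places cleaner than the paper's (the Gaussian-integer argument for rotation-invariant lattices and the rectangular-versus-rhombic dichotomy for reflection-invariant ones replace the paper's explicit closest-vector computations, and your treatment of the rank-$0$ and rank-$1$ cases matches the table). The fatal problem is the step you yourself flag as ``the main obstacle'': you assert, with no argument whatsoever, that if the deck group $\Gamma$ contains a glide reflection --- so that the quotient surface is a Klein bottle or an open M\"obius band --- then $X$ is nevertheless abstractly isomorphic to some $Z^2/\Lambda$ with $\Lambda$ a translation lattice. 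Note that freeness does not help you here: glide reflections act freely, which is exactly why they cannot be excluded the way rotations and reflections are.

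Moreover, no argument of the shape you propose can succeed. Since $X$ has frequency exactly $2$, its $4$-cycles are precisely the faces of the quadrangulation, so the surface obtained by gluing a disc into every $4$-cycle is determined up to homeomorphism by the abstract graph; it is an isomorphism invariant. The canonical surface of $Z^2/\Lambda$ is the plane, a cylinder, or a torus, so a graph in $\cF_2$ whose canonical surface were a Klein bottle or M\"obius band could never be isomorphic to any $Z^2/\Lambda$. Hence the only way to complete the proof is to show that no graph in $\cF_2$ has a non-orientable canonical surface, i.e.\ that $\Gamma$ contains no glide reflection. This is exactly the point at which the paper brings in outside machinery: it forms the medial graph of $X$, which would be a $4$-regular vertex-transitive quadrangulation of the Klein bottle, invokes Babai's classification of vertex-transitive maps on the Klein bottle (the two families $B$1 and $B$2), and checks that reversing the medial construction on those families produces either parallel edges or frequency greater than $2$, a contradiction. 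Your proposal contains no substitute for this step (neither for the Klein bottle nor for the M\"obius band), so as outlined the proof does not go through.
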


\begin{proof}
Let $X$ be a graph in $\cF_2$.  First suppose that there are two adjacent edges which are contained in at least two common 4-cycles.  In this case $X$ must have a subgraph isomorphic to $K_{3,2}$.   Consider a vertex $v$ of degree 3 in this subgraph and let $e$ be the edge incident with $v$ not in this $K_{3,2}$.  Now $e$ must be contained in two 4-cycles, but this gives us at least five 4-cycles through $v$, which is contradictory.  It follows from this that we may obtain an embedding of our graph by adding a disc to each 4-cycle of $X$.  

Suppose (for a contradiction) that this surface is a Klein Bottle.  Now construct the medial graph $Y$ of $X$ as follows:
The vertex set of $Y$ is the set of midpoints of edges of $X$, and two vertices of $Y$ are adjacent if these edges are consecutive on a face of $X$.  This gives us an embedding of $Y$ as a quadrangulation of the Klein bottle, and since $X$ was edge transitive, it follows that $Y$ is a vertex transitive embedded graph.  However, Babai~\cite{B} has classified the vertex transitive maps on the Klein bottle, and shown that they all satisfy a ``4-stripes constraint'' which implies the existence of short noncontractible cycles.  In Appendix B of \cite{B} he demonstrates that there are just two families of 4-regular vertex-transitive quadrangulations of the Klein Bottle, denoted $B$1 and $B$2.  If $Y$ was one of these, reversing our operations to recover $X$ would result in a graph with either parallel edges or with girth 4 but frequency $>2$.  

Now let us pass to the universal cover of this embedded graph.  This universal cover must be a simple 4-regular quadrangulation, so it must be the standard embedding of $Z^2$ in $\mathbb{R}^2$.  It follows from the theory of surfaces (and the assumption that $X$ is not embedded in the Klein Bottle) that our embedded graph~$X$ is isomorphic to $Z^2 / \Lambda$ embedded in $\mathbb{R}^2 / \Lambda$ where $\Lambda$ is an additive subgroup of $\mathbb{Z}^2$.  If $\Lambda$ is trivial, then we have the first outcome in our table.  

It follows from the standard action of $\mathbb{Z}^2$ that our embedded graph is vertex transitive, and we call these automorphisms \emph{translations}.  The edges of our graph fall into two orbits under these translations which we will call horizontal and vertical.  Since $X$ is edge-transitive, there must also be an automorphism $\phi$ of $X$ which sends a horizontal edge to a vertical edge.  This automorphism lifts to an automorphism $\tilde{\phi}$ of $Z^2$ which interchanges the horizontal and vertical edges.  By composing $\tilde{\phi}$ with a suitable translation, we may obtain an automorphism $\tilde{\psi}$ of $Z^2$ which fixes $(0,0)$ and still interchanges the vertical and horizontal edges.  Furthermore, $\tilde{\psi}$ projects to an automorphism $\psi$ of $X$, so $\tilde{\psi}$ maps $\Lambda$ to itself.  Since $\tilde{\psi}$ must give an isometry of the plane which fixes the origin and maps $\mathbb{Z}^2$ to itself, it must either be a reflection about the line $x=y$ or $x = -y$ or it must be a rotation by $\pm \frac{\pi}{2}$.  We will consider these two cases separately.  

In the latter case let $(a,b)$ be a closest point in $\Lambda \setminus \{ (0,0) \}$ to $(0,0)$.  Now in addition to $(0,0)$ and $(a,b)$, our subgroup $\Lambda$ must also contain $(-b, a)$ and the point $(a-b, a+b)$.  If there were another point in $\Lambda$ contained in the square with these four vertices, then $\Lambda$ would have two points at a distance smaller than $\sqrt{ a^2 + b^2 }$ contradicting our choice of $(a,b)$.  It follows that 
$\Lambda = \langle (a,b), (-b,a) \rangle$ giving us the third case above.

In the remaining case, we may assume that reflection about the line $x=y$ fixes $\Lambda$.  If $\Lambda$ is a subset of the line with equation $x=-y$ or a subset of the line with equation $x=y$ then $X$ will be isomorphic to one appearing in the second entry of our table.  Otherwise, we may choose a point $(a,b) \in \Lambda$ for which $a \neq \pm b$ and subject to this $(a,b)$ is as close to the origin as possible.  Now the points $(0,0)$, $(a,b)$, $(b,a)$, and $(a+b, a+b)$ are all in $\Lambda$.  If the parallelogram with these vertices does not contain any other point in $\Lambda$ then $(a,b)$ and $(b,a)$ generate $\Lambda$ and we have the fourth case in our table.  Otherwise, this parallelogram must contain a point of $\Lambda \setminus \{ (0,0) \}$ which lies on the line $x=y$ and is at least as close to $(0,0)$ as $(a+b, a+b)$.  Choose such a point $(c,c)$ with $c$ as small as possible.  If $c < \frac{a+b}{2}$ then $\Lambda$ will also contain the point $(a-c, b-c)$ and this contradicts our choice of $(a,b)$.  We conclude that $c = \frac{a+b}{2}$ and now the points $(0,0)$, $( \frac{a+b}{2}, \frac{a+b}{2})$, $(\frac{a-b}{2}, \frac{b-a}{2})$, and $(a,b)$ all lie in $\Lambda$ and we have the final case using $( \frac{a+b}{2}, \frac{a+b}{2})$ and $(\frac{a-b}{2}, \frac{b-a}{2})$.
\end{proof}

\section{Stability and the Proof}

In this section we will prove three stability lemmas and then use these to prove the main theorem.  Each of these stability lemmas shows that graphs in $\cF$ with certain local behaviour must be particular graphs or families of graphs appearing in our classification.

In preparation for our first lemma we remind the reader of some well-known facts concerning edge-transitive graphs.  If $X$ is an edge-transitive graph which is not arc-transitive then the arcs of $X$ fall into two orbits under the action of $\Aut(X)$.  By selecting one of these orbits we obtain a directed graph, $\vec{X}$, with the property that these edge orientations are preserved by $\Aut(X)$.  If there is a vertex of $\vec{X}$ with edges directed into and away from it then $X$ is vertex-transitive.  Otherwise, there is a partition of the vertices into $\{V_0, V_1\}$ so that every vertex in $V_0$ is a source and every vertex in $V_1$ is a sink, and $(V_0, V_1)$ forms a bipartition of $X$.

\begin{lemma}\label{lem:K32}
Let $X$ be a graph in $\cF$ with an induced subgraph isomorphic to $K_{3,2}$.  If no induced 
subgraph of $X$ is isomorphic to $K_{4,2}$, then $X \cong K_{5,5} - M$. 
\end{lemma}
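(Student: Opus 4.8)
The plan is to read the neighbourhood of the given $K_{3,2}$ off the two hypotheses, observe that one extra numerical input (the frequency) forces the whole graph, and then reconstruct $K_{5,5}-M$ explicitly; the single global step, computing that frequency, is isolated as the crux.

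First I fix the induced $K_{3,2}$ with independent parts $A=\{a_1,a_2,a_3\}$ and $B=\{b_1,b_2\}$, so that $4$-regularity gives each $a_i$ two neighbours off $B$ and each $b_j$ exactly one neighbour off $A$. The decisive elementary remark is that girth $4$ together with the absence of an induced $K_{4,2}$ bounds co-degrees: since $X$ is triangle-free, the common neighbours of any two non-adjacent vertices form an independent set, so four common neighbours would already induce a $K_{4,2}$. Writing $\mu(u,v)$ for the number of common neighbours, this gives $\mu(u,v)\le 3$ whenever $u\not\sim v$. Applied to $b_1,b_2$, whose common neighbours include $a_1,a_2,a_3$, it yields $\mu(b_1,b_2)=3$ with no further common neighbour; hence the extra neighbours $c_1\sim b_1$ and $c_2\sim b_2$ are distinct, and triangle-freeness forces $c_1,c_2$ off $A$ and the extra neighbours of each $a_i$ off $B$. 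This fixes a concrete $7$-vertex core.

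Next I would show that knowing the frequency is $6$ closes the graph with no further choices. The local double count at $b_1$, namely $2k=\sum_{\{x,y\}\subseteq N(b_1)}\mu(x,y)-6$ with all six summands at most $3$, forces each co-degree $\mu(a_i,a_j)$ and $\mu(a_i,c_1)$ to equal $3$ once $k=6$. Then each pair $a_i,a_j$ acquires a unique third common neighbour $d_{ij}$; these three are distinct, since any coincidence would make some vertex adjacent to all of $a_1,a_2,a_3$ and thereby create either a pair with $\mu=4$ or a vertex of degree $5$ (both impossible). The $d_{ij}$ are exactly the missing neighbours of the $a_i$, the conditions $\mu(a_i,c_\ell)=3$ make each $c_\ell$ adjacent to all three $d_{ij}$, and every vertex now has degree $4$ on a ten-vertex set. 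Reading off the adjacencies exhibits the independent parts $\{a_1,a_2,a_3,c_1,c_2\}$ and $\{b_1,b_2,d_{23},d_{13},d_{12}\}$ joined completely except for the matching $a_id_{jk}$, $c_1b_2$, $c_2b_1$; that is, $X\cong K_{5,5}-M$.

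The hard part is therefore proving that the frequency is $6$, equivalently that the common bipartite link between $N(u)\setminus\{v\}$ and $N(v)\setminus\{u\}$ of each edge $uv$ (the same graph for all edges by edge-transitivity, and of maximum degree $2$ since its vertex-degrees are exactly the numbers $\mu(\cdot,\cdot)-1$) is a $6$-cycle. The local data at $b_1$ only gives $k\le 6$, and is consistent with $k\in\{3,4,5\}$, so genuinely global symmetry must be used, and this is the main obstacle. My plan is to invoke the dichotomy recalled just before the lemma. I expect the bipartite source/sink case to be impossible here: the six edges of any induced $K_{3,2}$ are coherently oriented from $A$ to $B$, so such a bipartition would make $X$ semisymmetric, whereas the target $K_{5,5}-M$ is arc-transitive. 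In the remaining cases, where $X$ is vertex-transitive, I would use that every edge lies in an induced $K_{3,2}$ and compare, through a fixed edge, the numbers of such $K_{3,2}$'s in which each endpoint is a degree-$3$ vertex; these equal the numbers of degree-$2$ vertices on the two sides of the link, and I expect that pushing this comparison across all edges, anchored by the maximal pair $\{b_1,b_2\}$, forces the link to be regular and hence $2$-regular. Ruling out the frequencies $3,4,5$—and with them the larger or infinite graphs they would otherwise allow—is the crux of the argument.
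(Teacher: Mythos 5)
Your local analysis is sound: the co-degree bound $\mu(u,v)\le 3$ for non-adjacent $u,v$ (from triangle-freeness plus the absence of an induced $K_{4,2}$), the double count $2k=\sum_{\{x,y\}\subseteq N(b_1)}\mu(x,y)-6$, and the reconstruction of $K_{5,5}-M$ from the additional hypothesis $k=6$ are all essentially correct. But the proposal has a genuine gap exactly where you yourself locate it: you never prove $k=6$; you only outline a plan. Moreover, the one concrete idea in that plan is circular. You propose to dismiss the source/sink (semisymmetric) alternative because ``the target $K_{5,5}-M$ is arc-transitive'' --- but that $X$ is $K_{5,5}-M$ is the conclusion of the lemma, so its symmetry cannot be used to eliminate cases along the way; a semisymmetric $X$ containing an induced $K_{3,2}$ has to be killed by a structural contradiction derived from $X$ itself. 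The remaining step (``pushing this comparison across all edges \dots forces the link to be regular'') is stated as an expectation with no supporting argument. Since ruling out the intermediate frequencies is, as you say, the crux, what you have is a correct reduction plus an unexecuted plan, not a proof. (A small additional slip: your local count is also consistent with $k=2$, not just $k\in\{3,4,5\}$, so that value would need ruling out too.)

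For comparison, the paper never computes the frequency at all. It fixes a vertex $v$ with $N(v)=\{a,b,c,d\}$ and splits on whether every 3-element subset of $N(v)$ lies in the neighbourhood of some vertex other than $v$. In the affirmative case, $K_{4,2}$-freeness forces four distinct covering vertices, one per triple; inspection of the resulting 9-vertex configuration shows that every 2-edge path through the edge $\{v,a\}$ lies in exactly two 4-cycles, edge-transitivity propagates this property to all 2-edge paths, and two applications of it close the graph up to $K_{5,5}-M$. In the negative case, the asymmetry between covered and uncovered triples shows no automorphism carries the arc $(v,b)$ to $(v,d)$, so $X$ is edge- but not arc-transitive; the $\Aut(X)$-invariant orientation $\vec{X}$ (the dichotomy you cite) is then shown to be vertex-transitive and 2-in/2-out --- using that every orientation of $K_{3,2}$ has a vertex of indegree at least two and one of outdegree at least two --- and a short argument about common neighbours versus out-neighbours produces a contradiction. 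That second case is precisely the global symmetry argument your proposal is missing; filling your gap would require work of comparable substance, and your frequency-based route would additionally need it to handle each of $k=2,3,4,5$.
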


\begin{proof}
Choose $v \in V(X)$ and let $\{a,b,c,d\}$ be the neighbours of $v$.  First suppose that every 3 element subset of $\{a,b,c,d\}$ is contained in the neighbourhood of a vertex other than $v$.  Since we may assume $X$ has no subgraph isomorphic to $K_{4,2}$ this gives us a subgraph as follows:

 \begin{center}
  \begin{tikzpicture}[scale=.6]
   \coordinate[label=left:{$v$}] (v) at (0,0); \fill (v) circle(2pt);
   \coordinate[label=above:{$a$}] (a) at (3,1.5); \fill (a) circle(2pt);
   \coordinate[label=above:{$b$}] (b) at (3,.5); \fill (b) circle(2pt);
   \coordinate[label=above:{$c$}] (c) at (3,-.5); \fill (c) circle(2pt);
   \coordinate[label=above:{$d$}] (d) at (3,-1.5); \fill (d) circle(2pt);
   \coordinate[label=above:{$w$}] (w) at (6,-1.5); \fill (w) circle(2pt);
   \coordinate[label=above:{$x$}] (x) at (6,-.5); \fill (x) circle(2pt);
   \coordinate[label=above:{$y$}] (y) at (6,.5); \fill (y) circle(2pt);
   \coordinate[label=above:{$z$}] (z) at (6,1.5); \fill (z) circle(2pt);
   \draw (v) to (a) to (w) to (b) to (v) to (c) to (z) to (d) to (v);
   \draw (w) to (c) to (y); \draw (z) to (b) to (x) to (d) to (y) to (a) to (x);
   \end{tikzpicture}
 \end{center} 
By inspection, we find that every 2-edge path containing the edge $\{v,a\}$ is contained in exactly two 4-cycles.  Therefore, by edge-transitivity, every 2-edge-path in $X$ must be contained in exactly two 4-cycles.  Let $u$ be the vertex other than $a,b,c$ which is adjacent to $w$.  Applying this property to the 2-edge-path consisting of the edges $\{u,w\}$ and $\{w,a\}$ we deduce that the vertex $u$ must be adjacent to both $x$ and $y$.  Applying this to the 2-edge-path given by $\{u,w\}$ and $\{w,b\}$ implies that $u$ has neighbourhood $\{w,x,y,z\}$ so $X \cong K_{5,5}-M$.

We may now assume (without loss) that $\{a,b,c\}$ is contained in the neighbourhood of a vertex $w \neq v$ but $\{a,c,d\}$ is not contained in the neighbourhood of any vertex but $v$.  In particular, this implies that $X$ is not arc-transitive (since these assumptions forbid the existence of an automorphism mapping the arc $(v,b)$ to $(v,d)$).  Therefore, we may choose an orientation $\vec{X}$ of our graph which is preserved by $\Aut(X)$.  We may assume (possibly reversing orientations) that $(v,b)$ is an edge of $\vec{X}$, and so $(d,v)$ must also be an edge.  It now follows from edge-transitivity that our directed graph $\vec{X}$ is vertex transitive.  Since every orientation of $K_{3,2}$ contains vertices with indegree at least two and vertices with outdegree at least two we find that $\vec{X}$ is 2-regular.  We may now assume that $(v,b)$ and $(d,v)$ are edges of $\vec{X}$, and then choosing an automorphism mapping $(c,v)$ to $(d,v)$ implies the existence of a vertex $x \in V(X)$ which has $\{a,b,d\}$ as neighbours (in the underlying graph).  This gives us the following.

 \begin{center}
  \begin{tikzpicture}[scale=.6,>=latex]
   \coordinate[label=left:{$v$}] (v) at (0,0); \fill (v) circle(2pt);
   \coordinate[label=above:{$a$}] (a) at (3,1.5); \fill (a) circle(2pt);
   \coordinate[label=above:{$b$}] (b) at (3,.5); \fill (b) circle(2pt);
   \coordinate[label=above:{$c$}] (c) at (3,-.5); \fill (c) circle(2pt);
   \coordinate[label=above:{$d$}] (d) at (3,-1.5); \fill (d) circle(2pt);
   \coordinate[label=above:{$w$}] (w) at (6,1.5); \fill (w) circle(2pt);
   \coordinate[label=above:{$x$}] (x) at (6,.5); \fill (x) circle(2pt);
   \foreach \a/\b in {v/a,v/b,c/v,d/v} {
    \draw[->,shorten >= 1.5ex] (\a) to (\b);
   }   
   \draw (c) to (w) to (a) to (x) to (b) to (w);
   \draw (d) to (x);
   \end{tikzpicture}
 \end{center}

Our present assumptions imply that there does not exist a vertex other than~$v$ whose neighbourhood contains $\{a,c,d\}$ or $\{b,c,d\}$. It follows from this (and vertex transitivity) that whenever two distinct vertices $p,q$ in $X$ have three common neighbours, these three common neighbours include both out neighbours of $p$ and $q$ in $\vec{X}$.  Applying this property to the vertices $a$ and $b$ (which have $v,w,x$ as common neighbours) implies that we have the following orientations in $\vec{X}$.

 
%
%
%
 \begin{center}
  \begin{tikzpicture}[scale=.6,>=latex]
   \coordinate[label=left:{$v$}] (v) at (0,0); \fill (v) circle(2pt);
   \coordinate[label=above:{$a$}] (a) at (3,1.5); \fill (a) circle(2pt);
   \coordinate[label=above:{$b$}] (b) at (3,.5); \fill (b) circle(2pt);
   \coordinate[label=above:{$c$}] (c) at (3,-.5); \fill (c) circle(2pt);
   \coordinate[label=above:{$d$}] (d) at (3,-1.5); \fill (d) circle(2pt);
   \coordinate[label=above:{$w$}] (w) at (6,1.5); \fill (w) circle(2pt);
   \coordinate[label=above:{$x$}] (x) at (6,.5); \fill (x) circle(2pt);
   \foreach \a/\b in {v/a,v/b,a/w,a/x,b/w,b/x,c/v,d/v} {
    \draw[->,shorten >= 1.5ex] (\a) to (\b);
   }
   \foreach \a/\b in {w/c,x/d} {
    \draw (\a) to (\b);
   }
  
   \end{tikzpicture}
 \end{center}
 However, now $w$ and $v$ have $\{a,b,c\}$ as common neighbours, but both $a,b$ are inneighbours of $w$ in $\vec{X}$.  This contradiction completes the proof.
\end{proof}

\begin{lemma}\label{lem:Q4coHea}
Let $X$ be a graph in $\cF_k$ with no subgraph isomorphic to $K_{3,2}$.  If $k \ge 3$ then $X$ is isomorphic to either $Q_4$ or $coHea$.
\end{lemma}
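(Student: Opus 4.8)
The plan is to first pin down the frequency and the governing local combinatorics, then build the ball of radius two about an arbitrary vertex, and finally resolve a clean dichotomy among the vertices at distance three. Writing $N(x)$ for the neighbourhood of $x$, note first that girth $4$ makes $X$ triangle-free, so the hypothesis that $X$ contains no $K_{3,2}$ is equivalent to saying any two vertices have at most two common neighbours. I would use this to bound $k$: fixing an edge $\{u,v\}$ and writing $N(u)=\{v,a_1,a_2,a_3\}$ and $N(v)=\{u,b_1,b_2,b_3\}$ (with the $a$'s and $b$'s disjoint, by triangle-freeness), the four-cycles through $\{u,v\}$ correspond exactly to edges between $\{a_i\}$ and $\{b_j\}$. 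Applying the two-common-neighbour bound to the pair $u,b_i$ (whose common neighbours are $v$ together with any $a_j$ adjacent to $b_i$) shows each $b_i$ has at most one neighbour among the $a_j$; hence $k\le 3$, and with $k\ge 3$ we get $k=3$ and a perfect matching between the $a$'s and $b$'s. I would then count four-cycles through a vertex $u$: each of its four edges lies in exactly three of them and each such four-cycle uses two edges at $u$, so there are exactly $6$ four-cycles through $u$; since there are only $\binom{4}{2}=6$ pairs of neighbours, each contributing at most one (again by the bound), \emph{every} pair of neighbours of $u$ has exactly two common neighbours. Equivalently, any two vertices at distance two have exactly two common neighbours, so every path of length two lies in a unique four-cycle.

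Next I would describe the ball of radius two about a fixed $u$ with $N(u)=\{a_1,a_2,a_3,a_4\}$. For each pair $a_i,a_j$ let $w_{ij}$ be their unique common neighbour other than $u$. The two-common-neighbour bound forces the six $w_{ij}$ to be distinct, forces each $w_{ij}$ to be adjacent to exactly $a_i,a_j$ among the neighbours of $u$, and hence gives $N(a_i)=\{u\}\cup\{w_{ij}:j\ne i\}$. A short argument comparing common neighbours of $a_i$ with a putative $w$-neighbour then shows no two $w$'s are adjacent. Thus the ball of radius two is exactly the $11$ vertices $u$, the $a_i$, and the $w_{ij}$, with all adjacencies among them determined, and each $w_{ij}$ still missing exactly two neighbours.

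The crux is the analysis of a vertex $z$ at distance three from $u$. Such a $z$ is adjacent only to $w$'s and to farther vertices, and by applying the two-common-neighbour condition to the pairs $\{a_i,z\}$ for each $w_{ij}\sim z$ I would show $z$ is of exactly one of two types: \emph{type $Q$}, adjacent to the three $w$'s whose index-pairs avoid a fixed coordinate $\ell$ (three $w$-neighbours); or \emph{type $H$}, adjacent to four $w$'s forming two complementary ``opposite'' pairs and missing the third (four $w$-neighbours). Counting the $12$ edges between the $w$'s and the distance-three vertices, and using that at most one distance-three vertex can realize a given type (else some pair of $w$'s would acquire three common neighbours), the global situation splits into two cases: either all distance-three vertices are type $Q$, or at least one is type $H$.

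Finally I would close up each case. If a type-$H$ vertex exists it witnesses an opposite pair of $w$'s at distance two; chasing the forced second common neighbours of all three opposite pairs produces exactly three type-$H$ vertices (one per opposite pair), shows every $w$ has both of its distance-three neighbours among these three, and shows these three vertices have all four of their neighbours among the $w$'s, so there are no distance-four vertices and $X$ is the $14$-vertex graph $\coHea$. If instead every distance-three vertex is type $Q$, the edge count forces exactly four of them $t_1,\dots,t_4$; comparing common neighbours of $t_\ell,t_{\ell'}$ forces them to share a single further neighbour $\Omega$, whose neighbourhood is then exactly $\{t_1,\dots,t_4\}$, giving the $16$-vertex graph $Q_4$. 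Connectedness of $X$ ensures the ball exhausts the graph in each case. I expect the main obstacle to be precisely this distance-three analysis — establishing the two types and doing the bookkeeping that forces a \emph{uniform} type and makes each case close up to a single graph — whereas the reductions to $k=3$ and to the $11$-vertex ball are comparatively routine.
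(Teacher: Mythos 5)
Your proposal follows essentially the same route as the paper's proof: root a breadth-first layering at a vertex, force $k=3$ and the unique-completion property for 2-paths, identify the six distance-two vertices with the edges of an auxiliary $K_4$, and resolve the distance-three layer into the triangle/4-cycle dichotomy (your types Q and H), which closes up to $Q_4$ and $\coHea$ respectively. The only minor difference is that you derive the completion property by double counting (six 4-cycles through a vertex versus six neighbour pairs) rather than from edge-transitivity as the paper does; both work, and the rest of your bookkeeping matches the paper's analysis of the subgraphs $C_x \subseteq K_4$.
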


\begin{proof}
We will label vertices with subsets of $\br{1,2,3,4}$ and will write, for example, $12$ to represent $\br{1,2}$.  Label a vertex $\emptyset$ and for each $i \geq 0$, let $L_i = \br{v : \mathrm{dist}(v,\emptyset) = i}$ be the set of vertices at distance $i$ from $\emptyset$.  Label the vertices in $L_1$ with (distinct) singletons 1, 2, 3, and 4.   

Observe that every vertex in $L_2$ is adjacent to at most two vertices in $L_1$ (since $X$ has no subgraph isomorphic to $K_{3,2}$), and similarly, no two vertices in $L_2$ have two common neighbours in $L_1$.  Now for a vertex in $L_2$ with two neighbours in $L_1$, say $a$ and $b$, we assign this vertex the label $ab$.  

By the above, we know that every 2-edge path with vertex sequence $a, \emptyset, b$ for $1 \le a,b \le 4$ completes to at most one 4-cycle.  However, our frequency assumption implies that the edge between $\emptyset$ and $a$ must be contained in $k \ge 3$ such 4-cycles.  Therefore, $k=3$ and every 2-edge path of the form $a, \emptyset, b$ completes to a unique 4-cycle using the vertex labelled $ab$.  So, $L_2$ consists of 6 vertices, each of which is labelled by a distinct 2 element subset.  

Next we observe that every 2-edge path containing the edge between $\emptyset$ and $1$ is contained in a unique 4-cycle.  So, by edge-transitivity, we have the \emph{completion property}: every 2-edge path in $X$ is contained in a unique 4-cycle.  Using this completion property, it is straightforward to prove that $L_2$ must be an independent set (an adjacency of the form $ab \sim bc$ would create a 3-cycle, whereas any of type $ab \sim cd$ would impose a 4-cycle through the path $a, ab, cd$, but $N(a) \cap N(cd) = \emptyset$).

To prepare for working with $L_3$, consider a complete graph $K_4$ with $V(K_4) = \{1,2,3,4\}$.  The edges of this $K_4$ have the same names as the labels of vertices in $L_2$.  So, we may associate each vertex $x \in L_3$ with the subgraph $C_x$ of $K_4$ which has edge set $N(x) \cap L_2$ (i.e. all neighbours of $x$ which are in $L_2$).  Let $\mathcal{C}$ denote the set of subgraphs of $K_4$ associated with vertices in $L_3$.  

Suppose first that $C_x \in \mathcal{C}$ contains all of the edges $12$, $13$, and $14$ from our $K_4$.  Then the corresponding vertex $x$ in $L_3$ together with the vertices $1, 12, 13, 14$ form a subgraph isomorphic to $K_{3,2}$, giving us a contradiction.  It follows from this reasoning that every graph in $\mathcal{C}$ has maximum degree at most 2.  

By applying the completion property to the two paths in $X$ given by $12, 1, 13$ and $12, 1, 14$ we deduce that $L_3$ must contain a vertex $x$ adjacent to $12$ and $13$ and another vertex $x'$ adjacent to $12$ and $14$.  Since $X$ is 4-regular, these are the only vertices in $L_3$ adjacent to $12$.  It follows from this property that every graph in $\mathcal{C}$ is a cycle, and further the cycles in $\mathcal{C}$ cover every edge of the $K_4$ exactly twice.

The only possibilities are for $\mathcal{C}$ to consist of the three 4-cycles in $K_4$, or the four 3-cycles in $K_4$.  In the former case, every vertex in $L_3$ has all of its neighbours in $L_2$ and we find that $X$ is isomorphic to $\coHea$.  In the latter case, each vertex $x$ in $L_3$ is associated with a 3-cycle $C_x$ in $K_4$ and we may label each such $x$ with $V(C_x)$ (a 3-element subset of 1234).  Using these labels, we see that the graph induced by $L_0 \cup L_1 \cup L_2 \cup L_3$ is isomorphic to the graph obtained from $Q_4$ by removing a vertex.  Now applying the completion property to the edges between $L_3$ and $L_4$ implies that $X \cong Q_4$.  
\end{proof}

\begin{lemma}\label{lem:K42}
 Let $X = (V,E)$ be a graph in $\cF$ with a subgraph isomorphic to $K_{4,2}$.  Then $X$ is either isomorphic to $K_{4,4}$,  or $C_m^{ (2) }$ for some $m \ge 5$ (possibly $m= \infty$), or $2 \times H$ where $H$ is an arc-transitive 4-regular graph.
 \end{lemma}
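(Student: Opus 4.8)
The plan is to exploit the fact that, since $X$ is $4$-regular, a subgraph isomorphic to $K_{4,2}$ is precisely a pair of distinct vertices with identical neighbourhoods. First I would introduce the equivalence relation $\approx$ on $V$ given by $u \approx w$ iff $N(u) = N(w)$, and record its basic properties: every $\approx$-class $T$ has a common neighbourhood $N(T)$ of size $4$; distinct members of a class are non-adjacent (otherwise a vertex would be its own neighbour, or equivalently we would create a triangle, contradicting girth $4$); and, crucially, $N(T)$ is itself a union of $\approx$-classes, since if $a \approx a'$ and $a \in N(T)$ then $T \subseteq N(a) = N(a')$, whence $a' \in N(T)$. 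As every automorphism of $X$ permutes the $\approx$-classes and preserves their sizes, the $K_{4,2}$ hypothesis guarantees a class of size at least $2$.

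Next I would use edge-transitivity to constrain the class sizes. Because automorphisms preserve class sizes, the unordered pair $\{|[u]|,|[a]|\}$ is the same for every edge $\{u,a\}$; write it as $\{t,s\}$ with $t \ge s$, so $t \ge 2$. If $t = s$ then (as $X$ is $4$-regular, so every vertex lies on an edge) every vertex lies in a class of size $t$; if $t > s$ then the size-$t$ vertices and the size-$s$ vertices form the two sides of a bipartition of $X$. In either situation the neighbourhood of a size-$t$ class is a union of size-$s$ classes of total size $4$, and symmetrically, so both $t$ and $s$ divide $4$ and hence lie in $\{1,2,4\}$.

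This reduces the problem to a short list of cases for $(t,s)$. When $t = 4$, any neighbour $a$ of a vertex $u$ in a size-$4$ class satisfies $N(a) = [u]$, so all neighbours of $u$ are mutually $\approx$-equivalent; hence $N(u)$ is a single size-$4$ class, forcing $s = 4$, and connectedness gives $X \cong K_{4,4}$ (this simultaneously excludes the would-be cases $(4,1)$ and $(4,2)$). When $t = s = 2$, each class has exactly two neighbouring classes, so the quotient $X/\!\approx$ is a connected $2$-regular graph, i.e. a cycle $C_m$ (or the two-way-infinite path when $X$ is infinite); reconstructing $X$ by replacing each quotient vertex with a size-$2$ class and each quotient edge with a $K_{2,2}$ recovers exactly $C_m^{(2)}$, while the requirements that $X$ be simple, of girth $4$, and have genuine size-$2$ classes force $m \ge 5$. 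In the remaining case $(t,s) = (2,1)$, taking the size-$2$ classes as vertices and the singletons as edges of a new graph $H$, the fact that the neighbourhood of each singleton is a union of two size-$2$ classes makes $H$ a well-defined simple $4$-regular graph, and unwinding the definitions yields $X \cong 2 \times H$.

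Finally, in this last case I would deduce arc-transitivity of $H$ from edge-transitivity of $X$: automorphisms of $X$ descend to automorphisms of $H$ since they preserve the classes and the incidences between size-$2$ classes and singletons, and an edge of $X$ corresponds to an incidence (vertex, incident edge) of $H$, that is, to an arc; transitivity of $\Aut(X)$ on the edges of $X$ then gives transitivity of $\Aut(H)$ on arcs. I expect the main obstacle to be careful bookkeeping rather than a single hard idea: one must verify that the dichotomy between the $t = s$ and $t > s$ situations is exhaustive, that each admissible pair $(t,s)$ is either realized or excluded, and—most delicately—that the reconstruction maps in the cases $(2,2)$ and $(2,1)$ genuinely produce $C_m^{(2)}$ and $2 \times H$ with $H$ simple and arc-transitive, rather than a degenerate variant, which is exactly where the girth and simplicity hypotheses must be invoked.
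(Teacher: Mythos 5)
Your proof is correct and takes essentially the same route as the paper: both rest on the clone relation $N(u)=N(w)$, the partition into clone classes, and identical reconstructions in the three cases ($K_{4,4}$, the doubled cycle or doubled two-way-infinite path, and $2\times H$ with arc-transitivity of $H$ descending from edge-transitivity of $X$); your bookkeeping via the edge-invariant size pair $\{t,s\}$ and divisibility merely replaces the paper's split on whether every vertex has a clone, together with its appeal to the two-orbit bipartition of edge- but not vertex-transitive graphs. One point in your favour: in the $(2,1)$ case you correctly take the size-$2$ classes as the vertices of $H$ and the singleton classes as its edges, whereas the paper's text defines $Y$ on $V_1^{\bullet}$ (the singleton classes, adjacent when they share a neighbour), which as written would produce a line-graph-type object of the wrong degree; this is evidently a slip for $V_0^{\bullet}$.
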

 
\begin{proof} 
Call two vertices $u,v \in V$ \emph{clones} if $N(u) = N(v)$ and note that this implies $\{u,v\} \not\in E$.  Clones give an equivalence relation on $V$ and we let $V^{\bullet}$ denote the corresponding partition of $V$ into equivalence classes (so every $S \in V^{\bullet}$ is a maximal set of clones).  Let $X^{\bullet}$ be the graph with vertex set $V^{\bullet}$ and two sets $S,T \in V^{\bullet}$ adjacent in $X^{\bullet}$ if there is a vertex in $S$ adjacent to a vertex in $T$ (in the original graph $X$).  Note that $X^{\bullet}$ will still be connected and edge-transitive since these properties are inherited from $X$.  

First suppose that in the original graph $X$, every vertex has another clone.  It then follows that $X^{\bullet}$ has maximum degree at most 2, so $X^{\bullet}$ must be isomorphic to either $K_2$, $C_m$ for some $m \ge 3$, or a two-way infinite path.  In the first case $X \cong K_{4,4}$ and in the third we have $X \cong C_{\infty}^{(2)}$.  In the second case we find that $X \cong C_m^{(2)}$ for some $m \ge 3$.  If $m=3$ we have a contradiction to the assumption that $X$ has girth 4, and for $m=4$ note that $C_4^{(2)} \cong K_{4,4}$.  So in this case we must have $m \ge 5$ as desired.

Since $X$ contains a subgraph isomorphic to $K_{4,2}$, we know that it has distinct vertices which are clones.  As not every vertex has another clone, $X$ must not be vertex transitive.  It then follows from the edge-transitivity of $X$ that the vertices of $X$ fall into two orbits $V_0$ and $V_1$ under the action of $\Aut(X)$ and further $(V_0, V_1)$ is a bipartition of $X$.  Assume that the vertices in $V_1$ have no distinct clone and let $(V_0^{\bullet}, V_1^{\bullet})$ be the bipartition of $X^{\bullet}$ corresponding to $(V_0,V_1)$.  Since $X$ has degree 4 and it is not isomorphic to $K_{4,4}$, for every $v \in V_1$, the neighbours of $v$ must fall into two sets of clones, each of size two.  So, the graph $X^{\bullet}$ is bipartite with every vertex in $V_0^{\bullet}$ of degree 4 and every vertex in $V_1^{\bullet}$ of degree 2.  Now define $Y$ to be the graph with vertex set $V_1^{\bullet}$ and two vertices $u,v \in V_1^{\bullet}$ adjacent if $u,v$ have a common neighbour in $X^{\bullet}$.  Our construction implies that $X \cong 2 \times Y$; furthermore, the edge-transitivity of $X^{\bullet}$ implies that $Y$ is arc-transitive.  
\end{proof}

We are now ready to prove our main result.

\begin{proof}[Proof of Theorem~\ref{thm:main}]
Let $X$ be a graph in $\cF_k$.  If $X$ has a subgraph isomorphic to $K_{4,2}$ then by Lemma \ref{lem:K42}, it is isomorphic to $K_{4,4}$, to $C_m^{ (2) }$ for some $m \ge 5$ (possibly $m= \infty$) or to $2 \times H$ where $H$ is an arc-transitive 4-regular graph.  If it has a $K_{3,2}$ subgraph but no $K_{4,2}$ subgraph, then by Lemma \ref{lem:K32} we find that $X$ is isomorphic to $K_{5,5}-M$.  If $X$ has no $K_{3,2}$ subgraph and $k \ge 3$ then by Lemma \ref{lem:Q4coHea} it is isomorphic to either $Q_4$ or $\coHea$.  The only remaining possibilities are $k=1$ and $k=2$ which were dealt with in Sections \ref{squaresec} and \ref{quadsec}.  
\end{proof}


\end{document}